\newcommand{\QQ}{\mathbb{Q}}
\newcommand{\ZZ}{\mathbb{Z}}
\newcommand{\Oc}{\mathcal{O}}
\newtheorem{mymasterthm}{notForUse}
\theoremstyle{definition}
\newtheorem{myrem}[mymasterthm]{Remark}
\theoremstyle{plain}
\newtheorem{mythm}[mymasterthm]{Theorem}
\title[Separated Variables in Polynomial Power Sums]{Diophantine Equations in Separated Variables and Polynomial Power Sums}
\subjclass[2010]{11B37, 11C08}
\keywords{Diophantine equation, Bilu-Tichy theorem, linear recurrences}
\author[C. Fuchs]{Clemens Fuchs}
\author[S. Heintze]{Sebastian Heintze}
\thanks{Supported by Austrian Science Fund (FWF): I4406.}
\address{University of Salzburg\newline
	\indent Department of Mathematics\newline
	\indent Hellbrunnerstr. 34 \newline
	\indent A-5020 Salzburg, Austria}
\email{clemens.fuchs@sbg.ac.at, sebastian.heintze@sbg.ac.at}
\begin{document}
	
	\maketitle
	
	
	\begin{abstract}
		We consider Diophantine equations of the shape $ f(x) = g(y) $, where the polynomials $ f $ and $ g $ are elements of power sums.
		Using a finiteness criterion of Bilu and Tichy, we will prove that under suitable assumptions infinitely many rational solutions $ (x,y) $ with a bounded denominator are only possible in trivial cases.
	\end{abstract}
	
	\section{Introduction}
	
	Let $ f $ and $ g $ be integer polynomials. Diophantine equations of the shape $ f(x) = g(y) $ were already considered by many authors and under different assumptions. See \cite{kreso-tichy-2018} for an overview.
	
	Bilu and Tichy gave in \cite{bilu-tichy-2000} a criterion based on Siegel's theorem which characterizes the situations when the equation $ f(x) = g(y) $ has infinitely many rational solutions with a bounded denominator (see also \cite{bilu-fuchs-luca-pinter-2013}). For that, they used the notion of so-called \emph{standard pairs}. We shall describe standard pairs and their result in section \ref{p6-sec:prelim}.
	
	Furthermore, several authors studied the case when $ f $ and/or $ g $ come from special families of polynomials.
	Recently, Kreso (cf. \cite{kreso-2017}) considered the case when $ f $ and $ g $ are lacunary polynomials and used the criterion of Bilu and Tichy to deduce results about the finiteness of the number of solutions of the equation $ f(x) = g(y) $.
	Lacunary polynomials are polynomials of the shape $ c_1 x^{e_1} + \cdots + c_l x^{e_l} + c_{l+1} $ for a fixed number $ l $ of nonconstant terms where the $ c_i $ and $ e_i $ may vary with the only restriction that the $ e_i $ must be pairwise distinct.
	Kreso proved that under some assumptions on the exponents $ e_i $ and if $ g $ is indecomposable, then $ f(x) = g(y) $ has infinitely many solutions with a bounded denominator if and only if $ f = g \circ \mu $ for a linear polynomial $ \mu $.
	
	Dujella and Tichy proved in \cite{dujella-tichy-2001} the finiteness of the number of integer solutions for the situation when $ f,g $ are generalized Fibonacci polynomials.
	Moreover, Dujella and Gusic \cite{dujella-gusic-2007} as well as Stoll \cite{stoll-2008} considered families of polynomials parametrized by two parameters and a binary recurrence relation.
	Beyond this the case of truncated binomial polynomials was considered in \cite{dubickas-kreso-2016} by Dubickas and Kreso, sums of products of consecutive integers are considered in \cite{bazso-berczes-hajdu-luca-2018} by Bazso et al., and Bernoulli and Euler polynomial related families in \cite{pinter-rakaczki-2016} by Pinter and Rakaczki.
	
	In the present paper we are also considering Diophantine equations of the type $ f(x) = g(y) $. Here we are going to assume that the polynomials $ f $ and $ g $ come from polynomial power sums, i.e. simple linear recurrence sequences of polynomials.
	We remark that polynomial power sums can be seen as a variant of lacunary polynomials since its Binet representation has a fixed number of summands.
	
	\section{Results}
	
	Let $ G_n(x) = a_1 \alpha_1(x)^n + \cdots + a_d \alpha_d(x)^n $ with $ d \geq 2 $ and polynomial characteristic roots $ \alpha_1(x), \ldots, \alpha_d(x) \in \QQ[x] $ as well as constants $ a_1, \ldots, a_d \in \QQ $ be the $ n $-th polynomial in a linear recurrence sequence of polynomials satisfying the dominant root condition $ \deg \alpha_1 > \max_{i=2,\ldots,d} \deg \alpha_i $ and having at most one constant characteristic root. Assume furthermore that $ G_n(x) $ cannot be written in the form $ \widetilde{a_1} \widetilde{\alpha_1}(x)^n + \widetilde{a_2} \widetilde{\alpha_2}^n $ for $ \widetilde{\alpha_1}(x) \in \QQ[x] $ a perfect power of a linear polynomial and $ \widetilde{a_1}, \widetilde{a_2}, \widetilde{\alpha_2} \in \QQ $.
	We will refer to the assumptions in this paragraph by saying \emph{$ G_n(x) = a_1 \alpha_1(x)^n + \cdots + a_d \alpha_d(x)^n $ is the $ n $-th polynomial in a linear recurrence sequence of the required shape}.
		
	We call a polynomial $ f $ of degree $ \deg f \geq 2 $ decomposable if it can be written in the form $ f = g \circ h $ for polynomials $ g, h $ satisfying $ \deg g \geq 2 $ and $ \deg h \geq 2 $. Here $ \circ $ denotes the composition of functions. If such a decomposition does not exist, then we call the polynomial $ f $ indecomposable.
	
	Furthermore, we say that an equation $ f(x) = g(y) $ has infinitely many rational solutions with a bounded denominator if there exists a positive integer $ z $ such that $ f(x) = g(y) $ has infinitely many solutions $ (x,y) \in \QQ^2 $ with $ zx,zy \in \ZZ $.
	
	Our main result is the following theorem. In Remark \ref{p6-rem:generalize} below we give a possibility how to generalize it to arbitrary number fields:
	
	\begin{mythm}
		\label{p6-thm:mainthm}
		Let $ G_n(x) = a_1 \alpha_1(x)^n + \cdots + a_d \alpha_d(x)^n $ be the $ n $-th polynomial in a linear recurrence sequence of the required shape.
		Analogously, let $ H_m(y) = b_1 \beta_1(y)^m + \cdots + b_t \beta_t(y)^m $ be the $ m $-th polynomial in a linear recurrence sequence of the required shape.
		Moreover, assume that $ n,m > 2 $. If $ G_n(x) $ is indecomposable, then the equation in separated variables
		\begin{equation}
			\label{p6-eq:equation}
			G_n(x) = H_m(y)
		\end{equation}
		has infinitely many rational solutions with a bounded denominator if and only if there exists a polynomial $ P(y) \in \QQ[y] $ such that $ H_m(y) = G_n(P(y)) $ holds identically.
		
		If in addition $ H_m(y) $ is also indecomposable, then in the above statement we can restrict $ P(y) $ to be linear.
	\end{mythm}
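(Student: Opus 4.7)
The plan is to use the Bilu--Tichy criterion combined with the rigidity of the power-sum structure of $G_n$. The reverse direction is immediate: given $H_m(y) = G_n(P(y))$, the infinite set $\{(P(y), y) : y \in \QQ\}$ consists of solutions whose denominators are uniformly bounded by that of the coefficients of $P$.

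For the forward direction I would apply the Bilu--Tichy theorem to \eqref{p6-eq:equation}: since $\deg G_n, \deg H_m \geq n, m > 2$, there exist a polynomial $\phi \in \QQ[x]$, linear polynomials $\lambda, \mu \in \QQ[x]$, and a standard pair $(f^*, g^*)$ over $\QQ$ (possibly after swapping the components) with
\[ G_n = \phi \circ f^* \circ \lambda, \qquad H_m = \phi \circ g^* \circ \mu, \]
such that the reduced equation $f^*(x) = g^*(y)$ already has infinitely many rational solutions with a bounded denominator. Because $G_n$ is indecomposable and $\lambda$ is linear, one of $\deg \phi = 1$ and $\deg f^* = 1$ must hold. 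In the case $\deg f^* = 1$ the composition $f^* \circ \lambda$ is linear, hence $\phi = G_n \circ L$ for a linear $L \in \QQ[x]$, and therefore $H_m = G_n \circ P$ with $P := L \circ g^* \circ \mu \in \QQ[y]$, which is the desired $P$. If additionally $H_m$ is indecomposable, then the identity $H_m = G_n \circ P$ with $\deg G_n \geq 2$ forces $\deg P = 1$, giving the strengthened conclusion.

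The remaining work is to exclude the case $\deg \phi = 1$, in which $G_n$ and $f^*$ agree up to affine substitutions on both sides, so $f^*$ inherits the power-sum Binet structure, the dominant root condition, and the forbidden-shape constraint of $G_n$. I would go through the five classes of standard pairs. Type $2$ is excluded because $\deg f^* = 2$ contradicts $\deg G_n \geq n > 2$. The exceptional Type $5$ has $\deg f^* \in \{4, 6\}$, which combined with $n > 2$ leaves only $(n, \deg \alpha_1) \in \{(3,2), (4,1), (6,1)\}$; after expanding $(aX^2-1)^3$ or $3X^4 - 4X^3$ in each subcase, the identification with the Binet expansion of $G_n$ collapses into the forbidden shape $\widetilde{a_1}\widetilde{\alpha_1}(x)^n + \widetilde{a_2}\widetilde{\alpha_2}^n$. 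The Dickson pairs of Types $3$ and $4$ would write $G_n = \rho_1(x)^n + \rho_2(x)^n$ with $\rho_1 \rho_2$ a nonzero constant, but the polynomiality of both $\rho_i$ then forces each to be constant, contradicting the dominant root assumption $\deg \alpha_1 \geq 1$. Finally, Type $1$, with $f^* \in \{X^q,\, aX^p r(X)^q\}$, yields an identity
\[ G_n(x) \;=\; A \cdot \ell(x)^p\, r(\ell(x))^q + B \]
for a linear $\ell$, which by comparing degrees and leading terms with the Binet expansion forces $G_n$ once more into the forbidden shape.

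The main obstacle is Type $1$. Passing from the factorization $G_n(x) - B = A\, \ell(x)^p\, r(\ell(x))^q$ to a statement about the characteristic roots $\alpha_i$ requires carefully exploiting three ingredients simultaneously: the dominant root condition (which identifies the top-degree terms on both sides and pins down $\alpha_1^n$ as a scalar multiple of a specific factor), the hypothesis that at most one characteristic root is constant (which controls how $B$ is absorbed into the Binet sum), and the unique factorization of polynomials over $\QQ[x]$ (which forces the non-constant $\alpha_i$ to have very restricted shapes). The outcome should be that $d = 2$, that $\alpha_1$ is a perfect power of a linear polynomial, and that $\alpha_2$ is constant, precisely the excluded shape, thereby closing the case analysis and completing the proof.
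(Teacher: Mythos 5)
Your overall architecture coincides with the paper's: apply Bilu--Tichy, split according to whether $\deg\phi=1$ or $\deg f^*=1$, and in the latter case obtain $H_m=G_n\circ P$ exactly as the paper does (the paper phrases it as $\deg\varphi>1$ forcing $\deg f^*=1$ by indecomposability, then sets $P(y)=(q(y)-c_0)/c_1$). The gaps are in your exclusion of standard pairs when $\deg\phi=1$. For Types 3 and 4 your argument rests on a misreading of the Dickson functional equation: $D_k\left(x+\frac{a}{x},a\right)=x^k+\left(\frac{a}{x}\right)^k$ yields the decomposition $w^k+(a/w)^k$ only after the substitution $w+a/w=\lambda(x)$, and this $w$ is a root of $T^2-\lambda(x)T+a$, a genuine algebraic function since the discriminant $\lambda(x)^2-4a$ is not a square in $\QQ[x]$ for $a\ne 0$. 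There is therefore no identity $G_n=\rho_1(x)^n+\rho_2(x)^n$ with polynomial $\rho_i$ and $\rho_1\rho_2$ constant, and your contradiction evaporates. The paper instead combines the composition identity $D_{kl}(x,a)=D_k(D_l(x,a),a^l)$ with the indecomposability of $G_n$ to force the Dickson index to be prime; then $n\deg\alpha_1=p$ with $n>2$ gives $\deg\alpha_1=1$, which is impossible because $\deg\alpha_1=1$ would make $G_n=a_1\alpha_1^n+a_2\alpha_2^n$ with $\alpha_2$ constant and $\alpha_1$ linear, i.e.\ exactly the excluded shape (an observation you never state but need).

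For Type 1 you isolate the identity $G_n(x)=A\,\ell(x)^p\,r(\ell(x))^q+B$ as ``the main obstacle'' and only sketch a programme for it; this is a genuine gap, and it is not clear that the claimed conclusion (that such a factorization forces $G_n$ into the forbidden shape) is even provable. The analysis is also unnecessary: in a standard pair of the first kind one of the two coordinates is $x^k$, so with $\deg\phi=1$ one of the identities $G_n(x)=e_1\left(\lambda(x)^{\deg\alpha_1}\right)^n+e_0$ and $H_m(y)=e_1\left(\mu(y)^{\deg\beta_1}\right)^m+e_0$ must hold, and either one is precisely the excluded shape $\widetilde{a_1}\widetilde{\alpha_1}^n+\widetilde{a_2}\widetilde{\alpha_2}^n$ with $\widetilde{\alpha_1}$ a perfect power of a linear polynomial; no analysis of the other coordinate is needed. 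The same ``look at the other coordinate'' move repairs your Type 5 discussion as well: in your subcase $(n,\deg\alpha_1)=(3,2)$ the polynomial $e_1(a\lambda(x)^2-1)^3+e_0$ is in general not of the forbidden shape, since $a\lambda(x)^2-1$ is not the square of a linear polynomial; the actual contradiction there is the decomposability of $G_n$, or, as in the paper, the fact that the degree-$4$ coordinate forces $n=4$ or $m=4$ and hence $\deg\alpha_1=1$ or $\deg\beta_1=1$.
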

	
	We exclude the case when $ G_n $ or $ H_m $ has exactly one constant and one nonconstant characteristic root, where the nonconstant one is (a perfect power of) a linear polynomial, since the conclusion is not true in general in this situation.
	Consider for instance $ G_n(x) = a(ex+c)^n+b $ and $ H_m(y) = a(fy+d)^m+b $ for integers $ a,b,c,d,e,f $, where $ a,e,f $ are non-zero, and different primes $ n,m $.
	Then all other assumptions of the theorem are satisfied.
	Moreover, there is no polynomial $ P $ such that $ H_m(y) = G_n(P(y)) $ since the degrees of $ G_n $ and $ H_m $ are different primes.
	But there are obviously infinitely many rational solutions with a bounded denominator of the form $ x = (t^m-c)/e $ and $ y = (t^n-d)/f $ for $ t \in \ZZ $.
	
	Now we give two examples where all assumptions of the theorem are satisfied and where in the first one we have infinitely many rational solutions with a bounded denominator whereas in the second one there are only finitely many such solutions. Thus both situations can occur.
	Let
	\begin{align*}
		G_3(x) &= (x^2)^3 + (x+1)^3 = x^6 + x^3 + 3x^2 + 3x + 1, \\
		H_3(y) &= (y^4-2y^2+1)^3 + (y^2)^3 = y^{12} - 6y^{10} + 15y^8 - 19y^6 + 15y^4 - 6y^2 + 1.
	\end{align*}
	We leave it up to the reader to verify that all assumptions of the theorem are satisfied. One can check that the identity $ H_m(y) = G_n(P(y)) $ holds for the polynomial $ P(y) = y^2-1 $. Therefore, by Theorem \ref{p6-thm:mainthm}, we have infinitely many rational solutions with a bounded denominator.
	If we consider $ G_3(x) $ from above and
	\begin{equation*}
		H_7(y) = (y^2)^7 + (y+2)^7,
	\end{equation*}
	then we get $ \deg G_3 = 6 $ as well as $ \deg H_7 = 14 $. Hence $ \deg G_3 $ does not divide $ \deg H_7 $ and therefore there is no polynomial $ P $ such that $ H_m(y) = G_n(P(y)) $. By Theorem \ref{p6-thm:mainthm} we cannot have infinitely many rational solutions with a bounded denominator.
	
	Note that we can check whether there exists a polynomial $ P(y) $ such that $ H_m(y) = G_n(P(y)) $ holds a priori. To do so we first determine $ \deg P $ by the equality $ \deg H_m = \deg G_n \cdot \deg P $. If this equation has no solution in positive integers, then there is no such polynomial $ P $. Otherwise we start with a polynomial $ P $ of the given degree and unknown coefficients. By a comparison of coefficients we determine step by step (starting with the leading coefficient) the values for the coefficients of $ P $. If we end up in a contradiction, then there is no such polynomial $ P $. In the case that there is no contradiction we have found a polynomial with the sought property.
	We remark that in the case that there are only finitely many solutions our result is ineffective in the sense that we do not find all the solutions (for a given common denominator).
	
	Note that $ G_n $ and $ H_m $ can be elements of different linear recurrence sequences, but they could also be elements of the same linear recurrence sequence. We do neither require nor exclude the situation $ G_n(x) = G_m(y) $ for $ n \neq m $ if the assumptions of our theorem are satisfied.
	
	Furthermore, we remark that the polynomial of the second linear recurrence sequence $ H_m $ can be replaced by an arbitrary fixed polynomial $ h(y) \in \QQ[y] $. If we replace all assumptions about $ H_m $ by the two assumptions that $ \deg h > 4 $ and that $ h $ is not of the shape $ h(y) = a(cy+d)^k+b $ for rational numbers $ a,b,c,d $, then the same result as in Theorem \ref{p6-thm:mainthm} holds. The proof is completely analogous to the below given proof of Theorem \ref{p6-thm:mainthm}.
	
	\section{Preliminaries}
	\label{p6-sec:prelim}
	
	The proof of our theorem uses a criterion of Bilu and Tichy \cite{bilu-tichy-2000}, for which the following terminology of so-called \emph{standard pairs} is needed.
	
	In our notation, $ k $ and $ l $ are positive integers, $ a $ and $ b $ are non-zero rational numbers and $ p(x) $ is a non-zero polynomial with coefficients in $ \QQ $.
	We denote by $ D_k(x,a) $ the $ k $-th Dickson polynomial which is defined by the equation
	\begin{equation*}
		D_k \left( x + \frac{a}{x}, a \right) = x^k + \left( \frac{a}{x} \right)^k.
	\end{equation*}
	Using this notation we have the following five kinds of \emph{standard pairs} (over $ \QQ $); in each of them the two coordinates can be switched: A standard pair of the
	\begin{itemize}
		\item \emph{first kind} is
		\begin{equation*}
			(x^k, a x^l p(x)^k)
		\end{equation*}
		with $ 0 \leq l < k $, $ \gcd(k,l) = 1 $ and $ l + \deg p(x) > 0 $;
		\item \emph{second kind} is
		\begin{equation*}
			(x^2, (ax^2+b) p(x)^2);
		\end{equation*}
		\item \emph{third kind} is
		\begin{equation*}
			(D_k(x,a^l), D_l(x,a^k))
		\end{equation*}
		with $ \gcd(k,l) = 1 $;
		\item \emph{fourth kind} is
		\begin{equation*}
			(a^{-k/2} D_k(x,a), -b^{-l/2} D_l(x,b)).
		\end{equation*}
		with $ \gcd(k,l) = 2 $;
		\item \emph{fifth kind} is
		\begin{equation*}
			((ax^2-1)^3, 3x^4-4x^3).
		\end{equation*}
	\end{itemize}
	
	Our main tool is now the following theorem which is proven as Theorem 1.1 in \cite{bilu-tichy-2000} by Bilu and Tichy:
	\begin{mythm}
		\label{p6-thm:btcriterion}
		Let $ f(x), g(x) \in \QQ[x] $ be non-constant polynomials. Then the following two assertions are equivalent:
		\begin{enumerate}[a)]
			\item The equation $ f(x) = g(y) $ has infinitely many rational solutions with a bounded denominator.
			\item We have $ f = \varphi \circ f_1 \circ \lambda $ and $ g = \varphi \circ g_1 \circ \mu $, where $ \lambda(x), \mu(x) \in \QQ[x] $ are linear polynomials, $ \varphi(x) \in \QQ[x] $, and $ (f_1(x), g_1(x)) $ is a standard pair over $ \QQ $ such that the equation $ f_1(x) = g_1(y) $ has infinitely many rational solutions with a bounded denominator.
		\end{enumerate}
	\end{mythm}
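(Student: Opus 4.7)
The ``if'' direction is immediate: the identity $H_m(y)=G_n(P(y))$ means that every rational $y$ with bounded denominator supplies a rational solution $(P(y),y)$ of $G_n(x)=H_m(y)$, whose denominator is also bounded (inherited from the coefficients of $P$); since there are infinitely many such $y$, we obtain infinitely many solutions with bounded denominator.

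For the ``only if'' direction I would apply the Bilu--Tichy criterion (Theorem~\ref{p6-thm:btcriterion}) to $f=G_n$ and $g=H_m$, producing linear polynomials $\lambda,\mu\in\QQ[x]$, a polynomial $\varphi\in\QQ[x]$, and a standard pair $(f_1,g_1)$ with
\[
G_n=\varphi\circ f_1\circ\lambda,\qquad H_m=\varphi\circ g_1\circ\mu.
\]
I would then split on $\deg\varphi$. In the easy case $\deg\varphi\ge 2$, indecomposability of $G_n$ together with $\deg G_n=\deg\varphi\cdot\deg f_1$ forces $\deg f_1=1$, so
\[
P:=\lambda^{-1}\circ f_1^{-1}\circ g_1\circ\mu\in\QQ[y]
\]
satisfies $H_m=\varphi\circ g_1\circ\mu=\varphi\circ f_1\circ\lambda\circ P=G_n\circ P$. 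The addendum follows immediately: since $\deg G_n\ge n\ge 3$, the identity $H_m=G_n\circ P$ is a decomposition of $H_m$, so if $H_m$ is also indecomposable, $\deg P=1$.

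The substance of the proof is to rule out the case $\deg\varphi=1$. Here $G_n$ is affinely equivalent to $f_1$ and $H_m$ is affinely equivalent to $g_1$, so the shape of the standard pair must be compatible with the polynomial power-sum structure imposed on $G_n$ and $H_m$. I would check each of the five standard pairs (and both coordinate orderings): if $f_1=x^k$, then $G_n(x)=\widetilde a_1(cx+d)^k+\widetilde a_2$ with $k=n\deg\alpha_1$, which can be rewritten as $\widetilde a_1\bigl((cx+d)^{\deg\alpha_1}\bigr)^n+\widetilde a_2\cdot 1^n$, contradicting the excluded-shape hypothesis. For the first kind with $f_1=ax^lp(x)^k$, the dominant-root condition on $G_n$, combined with $\gcd(k,l)=1$, must be reconciled with the factored form; the analysis forces $p$ constant and $l=0$, reducing again to the excluded pure-power case. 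The second-kind form $(ax^2+b)p(x)^2$ is incompatible with $G_n$ having its sole dominant root (via a comparison of the multiplicity structure of the factorisation over $\overline\QQ(x)$). For the Dickson pairs (third and fourth kinds) one uses that $D_k(x,a)$ has a Binet expansion with only two characteristic roots whose product is constant, which clashes with the dominant-root and ``at most one constant root'' hypotheses on $G_n$ (exploiting $d\ge 2$ and $n,m>2$). The fifth kind has fixed degrees $4,6$, giving an immediate contradiction against $\deg G_n$ and $\deg H_m$ once $n,m>2$ is used.

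The main obstacle is the first-kind case with $f_1=ax^lp(x)^k$: combining the exponent constraints $0\le l<k$, $\gcd(k,l)=1$, $l+\deg p>0$ with the factorisation $\deg G_n=n\deg\alpha_1$ and the excluded-shape hypothesis requires a careful multiplicity/root analysis of $G_n$ over $\overline\QQ(x)$ to force the contradiction. The other standard pairs admit quicker contradictions from degree counts and leading-coefficient matching.
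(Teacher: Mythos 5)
You have not proved the statement at hand. The statement in question \emph{is} Theorem \ref{p6-thm:btcriterion}, the Bilu--Tichy criterion itself, and your argument opens by invoking precisely that criterion; as a proof of Theorem \ref{p6-thm:btcriterion} it is therefore circular. What you have actually sketched is a proof of Theorem \ref{p6-thm:mainthm}, the paper's main result, which is a different statement. Note that the paper never proves Theorem \ref{p6-thm:btcriterion}: it is quoted as Theorem 1.1 of \cite{bilu-tichy-2000}, and its proof is deep --- it rests on Siegel's theorem on integral points on curves, on Ritt-style decomposition theory, and on a classification showing that the five standard pairs are exactly those $(f_1,g_1)$ for which the curve $f_1(x)-g_1(y)=0$ has an irreducible component of genus $0$ with at most two points at infinity. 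None of these ingredients appears in your sketch, and none can be replaced by the degree counts, leading-coefficient comparisons, and multiplicity analyses you propose; those tools cannot even state, let alone establish, the finiteness half of the equivalence, which is where Siegel's theorem enters.

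Measured instead against the paper's actual proof of Theorem \ref{p6-thm:mainthm}, your sketch is broadly on track but misjudges one point and deviates in details. You call the first-kind case with $f_1 = a x^l p(x)^k$ ``the main obstacle''; it is no obstacle at all, because in a standard pair of the first kind the \emph{other} coordinate is $x^k$, and since the coordinates may be switched, the case $\deg\varphi = 1$ forces one of $G_n$, $H_m$ to have the form $e_1(\lambda(x))^{k}+e_0$ with $\lambda$ linear --- exactly the perfect-power shape excluded by hypothesis --- so no root or multiplicity analysis over $\overline{\QQ}$ is needed. For the third kind the paper argues via indecomposability of $G_n$ plus the composition identity $D_{kl}(x,a)=D_k(D_l(x,a),a^l)$ to force a prime index $p = n\deg\alpha_1$, whence $\deg\alpha_1 = 1$, contradicting the preliminary observation that $\deg\alpha_1 \geq 2$; your appeal to the two-root Binet form of Dickson polynomials is not the paper's route and, as stated, does not obviously yield a contradiction. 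For the fourth kind, $\gcd(k,l)=2$ makes the relevant Dickson polynomial decomposable, contradicting indecomposability. Finally, your claim that the fifth kind gives ``an immediate contradiction'' from the fixed degrees $4$ and $6$ is too quick: the degree-$6$ coordinate $(ax^2-1)^3$ is perfectly compatible with $n=3$, $\deg\alpha_1=2$; the paper instead uses the degree-$4$ coordinate to force $n=4$ or $m=4$, hence $\deg\alpha_1=1$ or $\deg\beta_1=1$, which is the real contradiction.
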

	
	\section{Proof}
	
	All necessary preparations that are needed for the proof of our theorem are finished. So we can start with the proof:
	
	\begin{proof}[Proof of Theorem \ref{p6-thm:mainthm}]
		First note that by the dominant root condition we have the bounds $ \deg \alpha_1 \geq 1 $ and $ \deg G_n = n \deg \alpha_1 > 2 $. Analogously, the bound $ \deg H_m = m \deg \beta_1 > 2 $ holds.
		
		The next important observation is that we can neither have $ \deg \alpha_1 = 1 $ nor $ \deg \beta_1 = 1 $. Otherwise, if $ \deg \alpha_1 = 1 $, then $ G_n(x) $ would have exactly two characteristic roots and one of them would be constant. This shape is forbidden by the conditions of the theorem. The argument for $ \deg \beta_1 $ is the same.
		
		Now assume that equation \eqref{p6-eq:equation} has infinitely many rational solutions with a bounded denominator.
		Thus, by Theorem \ref{p6-thm:btcriterion}, we have
		\begin{equation*}
			G_n = \varphi \circ g \circ \lambda
		\end{equation*}
		and
		\begin{equation*}
			H_m = \varphi \circ h \circ \mu
		\end{equation*}
		for a polynomial $ \varphi(x) \in \QQ[x] $, linear polynomials $ \lambda(x), \mu(x) \in \QQ[x] $ and a standard pair $ (g(x), h(x)) $.
		
		From here on we distinguish between two cases. In the first case we assume that $ \deg \varphi = 1 $.
		
		Then $ (g(x), h(x)) $ cannot be a standard pair of the first kind. Otherwise we would either have
		\begin{equation*}
			G_n(x) = e_1 (\lambda(x))^{n \deg \alpha_1} + e_0 = e_1 \left( (\lambda(x))^{\deg \alpha_1} \right)^n + e_0
		\end{equation*}
		or
		\begin{equation*}
			H_m(y) = e_1 (\mu(y))^{m \deg \beta_1} + e_0 = e_1 \left( (\mu(y))^{\deg \beta_1} \right)^m + e_0,
		\end{equation*}
		which contradicts the restrictions on the shape of $ G_n(x) $ and $ H_m(y) $.
		
		Moreover, $ (g(x), h(x)) $ cannot be a standard pair of the second kind since we have $ \deg G_n > 2 $ and $ \deg H_m > 2 $.
		
		If $ (g(x), h(x)) $ is a standard pair of the third kind, then we get
		\begin{equation}
			\label{p6-eq:gndickson}
				G_n(x) = e_1 D_p(\lambda(x),a) + e_0.
		\end{equation}
		Since $ G_n(x) $ is indecomposable and Dickson polynomials have the composition property
		\begin{equation*}
			D_{kl}(x,a) = D_k(D_l(x,a),a^l)
		\end{equation*}
		the index $ p $ in \eqref{p6-eq:gndickson} must be a prime.
		Hence
		\begin{equation*}
			n \deg \alpha_1 = \deg G_n = \deg D_p = p
		\end{equation*}
		together with $ n > 2 $ implies $ \deg \alpha_1 = 1 $. As shown above this is a contradiction.
		Therefore $ (g(x), h(x)) $ cannot be a standard pair of the third kind.
		
		Also, $ (g(x), h(x)) $ cannot be a standard pair of the fourth kind since otherwise
		\begin{equation*}
			G_n(x) = e_1 D_k(\lambda(x),a) + e_0
		\end{equation*}
		with an even $ k $ would contradict the fact that $ G_n(x) $ is indecomposable.
		
		Furthermore, $ (g(x), h(x)) $ cannot be a standard pair of the fifth kind. Otherwise we would have either $ g(x) = 3x^4-4x^3 $ or $ h(x) = 3x^4-4x^3 $. This means $ n \mid \deg G_n = 4 $ or $ m \mid \deg H_m = 4 $ and therefore $ n = 4 $ or $ m = 4 $, since $ n,m > 2 $. This ends up in the contradiction $ \deg \alpha_1 = 1 $ or $ \deg \beta_1 = 1 $.
		
		Thus the case $ \deg \varphi = 1 $ is not possible. So we can assume the second case, namely that $ \deg \varphi > 1 $.
		Since $ G_n $ is indecomposable, we have $ \deg g = 1 $. Consequently the identities
		\begin{equation*}
			G_n(x) = \varphi(c_1 x + c_0)
		\end{equation*}
		and
		\begin{equation*}
			H_m(y) = \varphi(q(y))
		\end{equation*}
		hold for a polynomial $ q(y) \in \QQ[y] $.
		Now we define the polynomial $ P(y) \in \QQ[y] $ by the equation
		\begin{equation*}
			P(y) := \frac{q(y)-c_0}{c_1}
		\end{equation*}
		which gives us the final identity
		\begin{equation*}
			G_n(P(y)) = G_n \left( \frac{q(y)-c_0}{c_1} \right) = \varphi (q(y)) = H_m(y).
		\end{equation*}
		
		If $ H_m(y) $ is indecomposable, then $ q(y) $ is linear. Thus by construction $ P(y) $ is linear, too.
		
		Conversely, if we assume the identity $ G_n(P(y)) = H_m(y) $, then equation \eqref{p6-eq:equation} obviously has infinitely many rational solutions with a bounded denominator.
	\end{proof}
	
	\begin{myrem}
		\label{p6-rem:generalize}
		We remark that if we utilize Theorem 10.5 in \cite{bilu-tichy-2000} instead of Theorem 1.1, then we can replace $ \QQ $ by an arbitrary number field $ K $ and get for a finite set $ S $ of places of $ K $, containing all archimedean ones, the analogous result as above for infinitely many solutions with a bounded $ \Oc_S $-denominator.
	\end{myrem}

\end{document}